\documentclass{amsart}
\usepackage{setspace}
\usepackage{a4}
\usepackage{amsthm}
\usepackage{latexsym}
\usepackage{amsfonts}
\usepackage{graphicx}
\usepackage{textcomp}
\usepackage{cite}
\usepackage{enumerate}
\usepackage{amssymb}
\usepackage{hyperref}
\usepackage{amsmath}
\usepackage{tikz}
\usepackage[mathscr]{euscript}
\usepackage{mathtools}
\newtheorem{theorem}{Theorem}[section]

\newtheorem{corollary}[theorem] {Corollary}
\newtheorem{definition}[theorem]{Definition}
\newtheorem{example}[theorem]{Example}

\title{This is the title}
\usepackage{fancyhdr}

\begin{document}
\hrule\hrule\hrule\hrule\hrule
\vspace{0.3cm}	
\begin{center}
{\bf\large{{Finite Field Tarski-Maligranda Inequalities}}}\\
\vspace{0.3cm}
\hrule\hrule\hrule\hrule\hrule
\vspace{0.3cm}
\textbf{K. Mahesh Krishna}\\
School of Mathematics and Natural Sciences\\
Chanakya University Global Campus\\
NH-648, Haraluru Village\\
Devanahalli Taluk, 	Bengaluru  North District\\
Karnataka State, 562 110, India\\
Email: kmaheshak@gmail.com\\

Date: \today
\end{center}

\hrule\hrule
\vspace{0.5cm}
\textbf{Abstract}: Let $\mathbb{F}$ be a sub-modulus field such that $2 \neq 0$. Let $\mathcal{X}$ be a sub-normed linear space over $\mathbb{F}$. Then we show that 
	\begin{align*}
	\bigg|\|x\|-\|y\|\bigg|\leq  \frac{2}{|2|}\|x+y\|+\frac{2}{|2|}\max\{\|x-y\|, \|y-x\|\}-(\|x\|+\|y\|)
\end{align*}
and 
\begin{align*}
	\bigg|\|x\|-\|y\|\bigg|\leq \|x\|+\|y\|-\frac{2}{|2|}\|x+y\|+\frac{2}{|2|}\max\{\|y-x\|, \|x-y\|\}.
\end{align*}
Above inequalities are finite field versions of important Tarski-Maligranda inequalities obained by Maligranda [\textit{Banach J. Math. Anal., 2008}].

\textbf{Keywords}: Normed linear space, Finite field, sub-modulus field, sub-normed linear space.\\
\textbf{Mathematics Subject Classification (2020)}:  12E20, 46B20.\\

\hrule

\hrule
\section{Introduction}

We all know that 
\begin{align}\label{TI}
	\bigg||r|-|s|\bigg|\leq \min\{|r-s|, |r+s|\}, \quad \forall r, s \in \mathbb{R}.
\end{align}
In 1930, Tarski observed  a strengthening of Inequality (\ref{TI}) (Page 688, \cite{TARSKI}):
\begin{align}\label{TE}
		\bigg||r|-|s|\bigg|=|r-s|+ |r+s|-(|r|+|s|), \quad \forall r, s \in \mathbb{R}.
\end{align}
Note that  Equality (\ref{TE}) fails for complex numbers \cite{MALIGRANDA}. In 2008, Maligranda showed that one can generalize Equality (\ref{TE}) by deriving  inequalities that hold for any normed linear space (NLS) \cite{MALIGRANDA}.
\begin{theorem} \cite{MALIGRANDA} \label{MT} (\textbf{Tarski-Maligranda Inequalities})
Let $\mathcal{X}$ be a NLS. Then for all $x, y \in \mathcal{X}\setminus\{0\}$, 
\begin{align}\label{MI1}
	\bigg|\|x\|-\|y\|\bigg|&\leq \|x+y\|+\|x-y\|-(\|x\|+\|y\|)
\end{align}
and 
\begin{align}\label{MI2}
	\bigg|\|x\|-\|y\|\bigg|\leq \|x\|+\|y\|-\bigg|\|x+y\|-\|x-y\|\bigg|.
\end{align}
\end{theorem}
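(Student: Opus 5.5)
The plan is to reduce both inequalities to elementary consequences of the triangle inequality, using that $|t|=\max\{t,-t\}$ for real $t$. Write $a=\|x\|$, $b=\|y\|$, $c=\|x+y\|$ and $d=\|x-y\|=\|y-x\|$. Each side involving an absolute value becomes a maximum of finitely many linear expressions in $a,b,c,d$. It then suffices to check each resulting linear inequality separately, by writing a suitable multiple of $x$ or $y$ as a sum of the vectors whose norms appear. I expect the hypothesis $x,y\neq 0$ not to be needed.

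For (\ref{MI1}), since $|a-b|=\max\{a-b,\,b-a\}$, I will prove two inequalities:
\[
a-b\leq c+d-a-b \quad\text{and}\quad b-a\leq c+d-a-b .
\]
These are equivalent to $2\|x\|\leq \|x+y\|+\|x-y\|$ and $2\|y\|\leq \|x+y\|+\|y-x\|$. Both follow from the triangle inequality together with $\|2z\|=2\|z\|$, applied to the identities $2x=(x+y)+(x-y)$ and $2y=(x+y)+(y-x)$.

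For (\ref{MI2}), the inequality is equivalent to $|a-b|+|c-d|\leq a+b$. Expanding both absolute values, this amounts to the four inequalities $\varepsilon(a-b)+\delta(c-d)\leq a+b$ for $\varepsilon,\delta\in\{\pm1\}$. After cancelling, they reduce to
\[
c\leq d+2b,\qquad d\leq c+2b,\qquad c\leq d+2a,\qquad d\leq c+2a .
\]
Each is a triangle inequality, coming respectively from the identities
\[
x+y=(x-y)+2y,\qquad x-y=(x+y)-2y,\qquad x+y=2x-(x-y),\qquad x-y=2x-(x+y).
\]

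No step is a genuine obstacle. The only care needed is the bookkeeping in (\ref{MI2}): the sign pattern $(\varepsilon,\delta)$ must be matched to the correct vector identity, and all four sign combinations must be covered, since $a-b$ and $c-d$ can have either sign independently.
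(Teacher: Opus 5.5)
Your proof is correct, and $x,y\neq 0$ is indeed not needed. The paper only cites this theorem from Maligranda, but its proof of the finite-field version uses the same argument: the identities $2x=(x+y)+(x-y)$ and $2y=(x+y)+(y-x)$ give (\ref{MI1}), with the bookkeeping done via $a+b\pm|a-b|=2\max\{a,b\}$ or $2\min\{a,b\}$ rather than by splitting signs. For (\ref{MI2}) you do more than the paper's analogue: it uses only your first and third inequalities, $c\le d+2b$ and $c\le d+2a$, which in a normed space give just $|a-b|\le a+b-(c-d)$, while your other two cases (equivalently, the substitution $y\mapsto -y$, which swaps $c$ and $d$ and fixes $a,b$) are what yield the full $|c-d|$.
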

We ask for finite field versions of Inequalities (\ref{MI1}) and (\ref{MI2}). 
We begin by introducing the notion of sub-modulus field. 
\begin{definition}
	Let $\mathbb{F}$ be a  (finite) field. A map $|\cdot|: \mathbb{F}\to [0, \infty)$ is said to be sub-modulus or sub-valued if the following conditions hold.
	\begin{enumerate}[\upshape (i)]
		\item If $\lambda  \in \mathbb{F}$ is such that $|\lambda|=0$, then $\lambda=0$.
		\item $|\lambda \mu|\leq |\lambda||\mu|$ for all $\lambda, \mu \in \mathbb{F}$.
		\item $|\lambda +\mu|\leq |\lambda|+|\mu|$ for all $\lambda, \mu \in \mathbb{F}$.
	\end{enumerate} 
	In this case, we say $\mathbb{F}$ as sub-modulus or sub-valued field. 
\end{definition}
\begin{example}
	Let $p$ be a prime, let $\mathbb{Z}_p\coloneqq \{0, 1, \dots, p-1\}$ be the standard field of integers modulo $p$. Given $n \in \mathbb{Z}_p$, we define 
	\begin{align*}
		|n|\coloneqq \text{unique } a \text{ such that } 0\leq a \leq p-1 \text{ and }n \equiv a (\text{mod }p).
	\end{align*}
\end{example}
We next introduce the notion of sub-normed linear space.
\begin{definition}
	Let $\mathcal{X}$ be a vector space over a sub-modulus field $\mathbb{F}$. A map $\|\cdot\|: \mathcal{X}\to [0, \infty)$ is said to be sub-norm if the following conditions hold.	
	\begin{enumerate}[\upshape (i)]
		\item If $x \in \mathcal{X}$ is such that $\|x\| =0$, then $x=0$.
		\item $\|\lambda x\|\leq |\lambda|\|x\|$ for all $ \lambda \in \mathbb{F}$, for all $x \in \mathcal{X}$.
		\item $\|x+y\|\leq \|x\|+\|y\|$ for all $x,y \in \mathcal{X}$.
	\end{enumerate} 
	In this case, we say $\mathcal{X}$ is a sub-normed linear space. 
\end{definition}
\begin{example}
	Let $p\in [1, \infty)$.  Let $\mathbb{F}$ be a sub-modulus field. For $d \in \mathbb{N}$, let $\mathbb{F}^d$ be the standard vector space. We define 
	\begin{align*}
		\|(a_j)_{j=1}^d\|_p\coloneqq \left(\sum_{j=1}^{d}|a_j|^p\right)^\frac{1}{p}, \quad \forall (a_j)_{j=1}^d \in \mathbb{F}^d.
	\end{align*}
	Then $\|\cdot\|_p$ is a sub-norm on $\mathbb{F}^d$.
\end{example}
\begin{example}
	Let $\mathbb{F}$ be a sub-modulus field. Define 
	\begin{align*}
		c_{00}(\mathbb{N}, \mathbb{F})	\coloneqq\left\{\{a_n\}_{n=1}^\infty: a_n \in \mathbb{F}, \forall n \in \mathbb{N}, a_n \neq 0 \text{ only for finitely many } n\right\}. 
	\end{align*}
	We define 
	\begin{align*}
		\|\{a_n\}_{n=1}^\infty\|_{00}\coloneqq \max_{n\in \mathbb{N}}|a_n|, \quad \forall \{a_n\}_{n=1}^\infty\in c_{00}(\mathbb{N}, \mathbb{F}).
	\end{align*}
	Then $\|\cdot\|_{00}$ is a sub-norm on  $c_{00}(\mathbb{N}, \mathbb{F}).$	
\end{example}

\section{Finite Field  Tarski-Maligranda Inequalities}
It is interesting to note that Inequality (\ref{TI}) may not hold for sub-modulus field. Fundamental reason for this is $|r|\neq |-r|$ in general, in a sub-valued field $\mathbb{F}$. 
For example, in $\mathbb{Z}_3$, $1=|1| \neq |-1|=|2|=2$. Hence 
\begin{align*}
		\bigg||0|-|1|\bigg|=\bigg|-|1|\bigg|=|-1|=|2|=2> \min\{|0-1|, |0+1|\}=\min\{|-1|, |1|\}=\min\{|2|, |1|\}=1.
\end{align*}
We first derive finite field version of Inequality (\ref{TI}). 
\begin{theorem}
Let $\mathcal{X}$ be a sub-normed linear space. Then 
\begin{align*}
		\bigg|\|x\|-\|y\|\bigg|\leq \max\{\|x-y\|, \|y-x\|\}.
\end{align*}
\end{theorem}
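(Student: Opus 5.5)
Here $\|x\|,\|y\|$ are nonnegative reals, so the outer $|\cdot|$ on the left is the ordinary real absolute value. Only the sub-norm axioms are needed; in particular, axiom (ii) of the sub-norm (scalar bound) plays no role. The plan is the classical reverse triangle inequality argument, applied twice, using the triangle inequality (iii) in two arrangements. This avoids ever comparing $\|u\|$ with $\|-u\|$, which is exactly what breaks (\ref{TI}) over a sub-modulus field.

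\textbf{Step 1.} Write $x=(x-y)+y$ and apply axiom (iii):
\begin{align*}
\|x\|\leq \|x-y\|+\|y\|, \quad\text{so}\quad \|x\|-\|y\|\leq \|x-y\|.
\end{align*}

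\textbf{Step 2.} Symmetrically, write $y=(y-x)+x$ to get
\begin{align*}
\|y\|-\|x\|\leq \|y-x\|.
\end{align*}

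\textbf{Step 3.} Bound both right-hand sides by $\max\{\|x-y\|,\|y-x\|\}$. Then both $\|x\|-\|y\|$ and its negative are at most this maximum, and since $\big|\|x\|-\|y\|\big|$ is the larger of the two, the inequality follows.

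There is no real obstacle here. The only point needing care is noticing why the maximum is necessary: in Step 2 one cannot replace $\|y-x\|$ by $\|x-y\|$, since $\|-u\|\leq|-1|\|u\|$ need not equal $\|u\|$. The maximum absorbs this asymmetry.
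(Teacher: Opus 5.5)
Your proposal is correct and is essentially the paper's own proof. Like the paper, you write $x=(x-y)+y$ and $y=(y-x)+x$, apply axiom (iii) to get $\|x\|-\|y\|\leq\|x-y\|$ and $\|y\|-\|x\|\leq\|y-x\|$, and take the maximum of the two bounds.
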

\begin{proof}
		Let $x, y \in \mathcal{X}$. Then 
		\begin{align*}
			\|x\|=\|(x-y)+y\|\leq \|x-y\|+\|y\|
		\end{align*}
	and 
		\begin{align*}
		\|y\|=\|(y-x)+x\|\leq \|y-x\|+\|x\|.
	\end{align*}
Hence 
	\begin{align*}
	\|x\|-\|y\|=\|(x-y)+y\|\leq \|x-y\|
\end{align*}
and 
\begin{align*}
	\|y\|-\|x\|=\|(y-x)+x\|\leq \|y-x\|.
\end{align*}
Therefore 
\begin{align*}
		\bigg|\|x\|-\|y\|\bigg|=\max\{\|x\|-\|y\|, \|y\|-\|x\|\}\leq \max\{\|x-y\|, \|y-x\|\}.
\end{align*}
\end{proof}

  Following is finite field version of Theorem \ref{MT}.
  \begin{theorem}
  	(\textbf{Finite Field Tarski-Maligranda Inequalities})
  	Let $\mathcal{X}$ be a sub-normed linear space over sub-modulus field $\mathbb{F}$. Assume $2\neq 0$. Then for all $x, y \in \mathcal{X}$, 
  	\begin{align*}
  		\bigg|\|x\|-\|y\|\bigg|\leq  \frac{2}{|2|}\|x+y\|+\frac{2}{|2|}\max\{\|x-y\|, \|y-x\|\}-(\|x\|+\|y\|)
  	\end{align*}
  	and 
  \begin{align*}
  	\bigg|\|x\|-\|y\|\bigg|\leq \|x\|+\|y\|-\frac{2}{|2|}\|x+y\|+\frac{2}{|2|}\max\{\|y-x\|, \|x-y\|\}.
  \end{align*}
  \end{theorem}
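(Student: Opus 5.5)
The plan is to rewrite both inequalities in terms of $\max\{\|x\|,\|y\|\}$ and $\min\{\|x\|,\|y\|\}$, using $\big|\|x\|-\|y\|\big|+\|x\|+\|y\|=2\max\{\|x\|,\|y\|\}$ and $\|x\|+\|y\|-\big|\|x\|-\|y\|\big|=2\min\{\|x\|,\|y\|\}$. Write $M\coloneqq\max\{\|x-y\|,\|y-x\|\}$. Since $|2|>0$ (because $2\neq 0$), the two claims become
\begin{align*}
|2|\max\{\|x\|,\|y\|\}\leq \|x+y\|+M \quad\text{and}\quad \|x+y\|-M\leq |2|\min\{\|x\|,\|y\|\}.
\end{align*}
Each of these is symmetric in $x,y$ after using $x+y=y+x$. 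So it suffices to prove the corresponding bound for $\|x\|$ and then for $\|y\|$.

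\textbf{Second inequality.} I expect this one to go through directly. Write $x+y=(x-y)+2y$. Axioms (iii) and (ii) of a sub-norm give
\begin{align*}
\|x+y\|\leq \|x-y\|+|2|\|y\|\leq M+|2|\|y\|.
\end{align*}
Symmetrically, $y+x=(y-x)+2x$ gives $\|x+y\|\leq M+|2|\|x\|$. Taking the better of the two bounds yields $\|x+y\|-M\leq |2|\min\{\|x\|,\|y\|\}$. Multiplying by $2/|2|$ and rearranging gives the second inequality.

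\textbf{First inequality.} The natural route mimics Maligranda's argument through $2x=(x+y)+(x-y)$ and $2y=(x+y)+(y-x)$. These give $\|2x\|\leq \|x+y\|+M$ and $\|2y\|\leq \|x+y\|+M$, so $\max\{\|2x\|,\|2y\|\}\leq \|x+y\|+M$. To conclude, I then need $|2|\|x\|\leq \|2x\|$. This is the main obstacle: axiom (ii) only gives $\|2x\|\leq|2|\|x\|$, which is the wrong direction.

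My first attempt would be to write $x=2^{-1}(2x)$, which gives $\|x\|\leq|2^{-1}|\,\|2x\|$. However, this yields the constant $|2^{-1}|$ rather than $1/|2|$. From $1\leq|1|\leq|2|\,|2^{-1}|$ one only gets $|2^{-1}|\geq 1/|2|$, again the unhelpful direction. So in full generality I would expect to prove the first inequality with $2|2^{-1}|$ in place of $2/|2|$.

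The inequality exactly as stated follows if $\|2x\|=|2|\|x\|$ holds for the relevant vectors, for instance when axiom (ii) is an equality for $\lambda=2$, as in the $\ell^p$ and $c_{00}$ examples when $|\cdot|$ is multiplicative on $2$. I would check carefully whether such an extra hypothesis is implicit. If it is not, I would test a small $\mathbb{Z}_p$ example for a counterexample before claiming the stated constant.
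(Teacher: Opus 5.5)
Your argument for the second inequality is correct and matches the paper's. Steps (\ref{F1})--(\ref{F3}) use $x+y=2x+(y-x)$ and $x+y=2y+(x-y)$ and then take the minimum, exactly as you do. Only $\|2z\|\le|2|\|z\|$ is needed there; the paper writes ``$=$'', but ``$\le$'' is what is true and what is used.

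For the first inequality you have found a genuine error, but it is in the paper, not in your reasoning. At (\ref{A1})--(\ref{A2}) the paper writes $\|x+y\|+\|x-y\|\ge\|2x\|=|2|\|x\|$. That is, it silently uses the reverse bound $\|2x\|\ge|2|\|x\|$, which, as you observed, axiom (ii) does not provide.

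The step cannot be repaired, because the first inequality is false. Take $\mathbb{F}=\mathcal{X}=\mathbb{Z}_3$ with the paper's own modulus ($|1|=1$, $|2|=2$) and $\|a\|=|a|$, which is the case $d=1$ of the $\ell^p$ example. Let $x=2$, $y=1$. Then $\|x+y\|=0$, $\|x-y\|=1$, $\|y-x\|=2$ and $2/|2|=1$. The right-hand side is $0+2-3=-1$, while $\big|\|x\|-\|y\|\big|=1$. The failure is exactly the one you predicted: $\|2x\|=|1|=1<4=|2|\|x\|$. More generally, $x=\frac{p+1}{2}$, $y=\frac{p-1}{2}$ in $\mathbb{Z}_p$ gives $1\le -1$ for every odd prime $p$, so the first inequality of the Corollary fails as well.

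So the only thing missing from your proposal is to run the test you suggested and state the conclusion: the first inequality is false as stated. Your version with $2|2^{-1}|$ in place of $2/|2|$ is a correct replacement under the given axioms. It follows from $\|x\|\le|2^{-1}|\,\|2x\|\le|2^{-1}|(\|x+y\|+\|x-y\|)$ and the analogous bound for $y$. Over $\mathbb{Z}_p$ this constant is $2|2^{-1}|=p+1$; in the example above the right-hand side becomes $4\cdot 0+4\cdot 2-3=5\ge 1$.

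The stated constant $2/|2|$ is recovered under the extra hypothesis $\|2z\|\ge|2|\|z\|$ for all $z$. However, this hypothesis is not implicit in the paper. It fails in the paper's own $\mathbb{Z}_p$ examples, because the standard modulus is not multiplicative on $2$: for instance $|2\cdot 2|=1<4$ in $\mathbb{Z}_3$.
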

  \begin{proof}
  	Let $x, y \in \mathcal{X}$. Then 
  \begin{align}\label{FF1}
  	\|x\|+\|y\|+\bigg|\|x\|-\|y\|\bigg|=2\max\{\|x\|, \|y\|\}.
  \end{align}	
  We also have 
  \begin{align}\label{A1}
  	\|x+y\|+\|x-y\|\geq \|(x+y)+(x-y)\|=\|2x\|=|2|\|x\|
  \end{align}
  and 
  \begin{align}\label{A2}
  \|x+y\|+\|y-x\|\geq \|(x+y)+(y-x)\|=\|2y\|=|2|\|y\|.
  \end{align}
  Inequalities (\ref{A1}) and (\ref{A2}) give 
  \begin{align}\label{FF2}
  \|x+y\|+\max\{\|x-y\|, \|y-x\|\}=\max\{\|x+y\|+\|x-y\|,\|x+y\|+\|y-x\|\}\geq  |2|\max\{\|x\|, \|y\|\}.
  \end{align}
  Equality (\ref{FF1}) and Inequality (\ref{FF2}) give 
  \begin{align*}
  	\|x\|+\|y\|+\bigg|\|x\|-\|y\|\bigg|=2\max\{\|x\|, \|y\|\}\leq \frac{2}{|2|}\|x+y\|+\frac{2}{|2|}\max\{\|x-y\|, \|y-x\|\}.
  \end{align*}
  Rearranging, 
  \begin{align*}
  	\bigg|\|x\|-\|y\|\bigg|\leq  \frac{2}{|2|}\|x+y\|+\frac{2}{|2|}\max\{\|x-y\|, \|y-x\|\}-(\|x\|+\|y\|).
  \end{align*}
  We next see that 
  \begin{align}\label{E1}
  	\|x\|+\|y\|-\bigg|\|x\|-\|y\|\bigg|=2\min\{\|x\|, \|y\|\}.
  \end{align}	
  We note that 
  \begin{align}\label{F1}
  	\|x+y\|\leq \|(x+y)-(y-x)+\|y-x\|=|2|\|x\|+\|y-x\| \implies 	\|x+y\|-\|y-x\|\leq |2|\|x\|
  \end{align}
and 
 \begin{align}\label{F2}
	\|x+y\|\leq \|(x+y)-(x-y)+\|x-y\|=|2|\|y\|+\|x-y\| \implies 	\|x+y\|-\|x-y\|\leq |2|\|y\|.
\end{align}
Inequalities (\ref{F1}) and (\ref{F2}) give 
\begin{align}\label{F3}
	\|x+y\|-\max\{\|y-x\|, \|x-y\|\}=\min\{\|x+y\|-\|y-x\|, \|x+y\|-\|x-y\|\}\leq |2|\min\{\|x\|, \|y\|\}.
\end{align}
Equality (\ref{E1}) and Inequality (\ref{F3}) give
\begin{align*}
	\|x+y\|-\max\{\|y-x\|, \|x-y\|\}\leq \frac{|2|}{2}(\|x\|+\|y\|)-\frac{|2|}{2}\bigg|\|x\|-\|y\|\bigg|.
\end{align*}
Rearranging, 
\begin{align*}
\bigg|\|x\|-\|y\|\bigg|\leq \|x\|+\|y\|-\frac{2}{|2|}\|x+y\|+\frac{2}{|2|}\max\{\|y-x\|, \|x-y\|\}.
\end{align*}
\end{proof}
  \begin{corollary}
  	Let $p>2$ be a prime number. 	Let $\mathcal{X}$ be a sub-normed linear space over sub-modulus field $\mathbb{Z}_p$. Then for all $x, y \in \mathcal{X}$, 
  	\begin{align*}
  		\bigg|\|x\|-\|y\|\bigg|\leq  \|x+y\|+\max\{\|x-y\|, \|y-x\|\}-(\|x\|+\|y\|)
  	\end{align*}
  	and 
  	\begin{align*}
  		\bigg|\|x\|-\|y\|\bigg|\leq \|x\|+\|y\|-\|x+y\|+\max\{\|y-x\|, \|x-y\|\}.
  	\end{align*}
  \end{corollary}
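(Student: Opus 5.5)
The corollary should follow by specializing the Finite Field Tarski-Maligranda Inequalities to $\mathbb{F}=\mathbb{Z}_p$ with the sub-modulus of the earlier example. The two things to check are that the hypothesis $2\neq 0$ holds and that the factor $\frac{2}{|2|}$ collapses to $1$.

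First, since $p>2$ is prime, we have $p\geq 3$. Hence $2\not\equiv 0 \pmod p$, so $2\neq 0$ in $\mathbb{Z}_p$. By the definition in the example, $|2|$ is the unique $a$ with $0\leq a\leq p-1$ and $a\equiv 2 \pmod p$. Since $0\leq 2\leq p-1$, this gives $|2|=2$.

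Next, I would invoke the Finite Field Tarski-Maligranda Inequalities for arbitrary $x,y\in\mathcal{X}$. Substituting $\frac{2}{|2|}=1$ into the first inequality gives
\begin{align*}
\bigg|\|x\|-\|y\|\bigg|\leq \|x+y\|+\max\{\|x-y\|, \|y-x\|\}-(\|x\|+\|y\|).
\end{align*}
Substituting into the second inequality gives
\begin{align*}
\bigg|\|x\|-\|y\|\bigg|\leq \|x\|+\|y\|-\|x+y\|+\max\{\|y-x\|, \|x-y\|\}.
\end{align*}
These are exactly the two claimed inequalities.

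There is no real obstacle here. The only point needing care is confirming that $|\cdot|$ on $\mathbb{Z}_p$ is the sub-modulus from the example, so that the theorem applies. It is also worth noting why $p=2$ is excluded: there $2=0$, and the hypothesis of the theorem fails.
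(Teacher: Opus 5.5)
Your specialization is exactly how the paper gets the corollary: it is stated right after the theorem with no further argument, and your check that $|2|=2$ for $p\geq 3$ is correct. But your claim that there is no real obstacle is wrong. The first inequality of the corollary is false, so no argument can prove it, and quoting it from the theorem only imports an error. Take $p=3$, $\mathcal{X}=\mathbb{Z}_3$ with $\|a\|=|a|$ (the paper's $\mathbb{F}^d$ example with $d=1$), and $x=y=2$. Then $\|x\|=\|y\|=2$. Also $x+y=1$, so $\|x+y\|=1$, and $x-y=y-x=0$. The first inequality then reads $0\leq 1+0-4=-3$. More generally, putting $y=x$ reduces the inequality to $2\|x\|\leq\|2x\|$. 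For every prime $p>2$ this fails at $x=\frac{p+1}{2}$, where $\|2x\|=1<p+1=2\|x\|$.

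The flaw is in the theorem you invoke. Step (A1) writes $\|2x\|=|2|\|x\|$, but a sub-norm only satisfies $\|\lambda x\|\leq|\lambda|\|x\|$. That is the wrong direction for the lower bound that (A1), (A2) and (FF2) need. The only available lower bound is $\|x\|=\|2^{-1}(2x)\|\leq|2^{-1}|\,\|2x\|$. Running the same argument with it gives the correct replacement
\[
\bigl|\|x\|-\|y\|\bigr|\leq 2|2^{-1}|\bigl(\|x+y\|+\max\{\|x-y\|,\|y-x\|\}\bigr)-(\|x\|+\|y\|).
\]
In $\mathbb{Z}_p$ the factor is $2|2^{-1}|=p+1$, and the counterexample above attains equality. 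Your second inequality, by contrast, is fine. Its derivation in (F1)--(F3) only needs $\|2x\|\leq|2|\|x\|$, which is the direction the axiom actually provides. So that half of your specialization goes through, but the first half cannot be rescued by specializing the theorem as stated.
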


\section{Conclusions}
\begin{enumerate}
	\item In 1930, Tarski observed an equality in the triangle inequality for real numbers \cite{TARSKI}.
	\item In 2008, Maligranda generalized  Tarski equality to   inequalities which are valid in every normed linear spaces \cite{MALIGRANDA}.
\item In this note, we  derived finite field  version of Tarski-Maligranda inequalities. 
\end{enumerate}

 \bibliographystyle{plain}
 \bibliography{reference.bib}

\end{document}